\let\emptyset\varnothing
\theoremstyle{definition}
\newtheorem{theorem}{Theorem}[section]
\newtheorem{lemma}[theorem]{Lemma}
\newtheorem{remark}[theorem]{Remark}
\newtheorem{notation}[theorem]{Notation}
\newtheorem{counter example}[theorem]{Counter Example}
\newtheorem*{Choice}{Axiom of Choice}
\newtheorem*{c-Choice}{Countable Axiom of Choice - CAC} 
\newtheorem*{d-Choice}{Axiom of Dependent Choice - ADC}
\newtheorem*{constructive-conclusion}{Constructive-Conclusion}
\pgfplotsset{my style/.append style={axis x line=middle, axis y line=
		middle, xlabel={$x$}, ylabel={$y$}, axis equal }}
\font\myfont=cmr12 at 6pt
\newcommand{\cc}[1]{\ignorespaces}
\newcommand{\I}{\mathbf{i}}
\DeclareMathOperator\Arg{Arg}
\DeclareMathOperator\Ln{Ln}
\newcommand{\expp}{\mathbf{exp}}
\newcommand{\Lnn}{\textcolor{blue}{Ln}}
\newcommand{\Lnnn}{\textcolor{green}{Ln}}
\title{Paradox on the Countable Axiom of Choice}
\author[Babak Jabbar Nezhad]{Babak Jabbar Nezhad \\ {\myfont "Dedicated to L.E.J. Brouwer"}}
\address{Da\c{s} Maku, West Azerbaijan, Iran}
\email{babak.jab@gmail.com}
\subjclass[2020]{03E25, 03F55, 13P05, 30C15}
\date{}
\keywords{Law of Excluded Middle, Axiom of Choice, Unique Factorization Domain, Polynomial Ring, Zeros of analytic functions}
\thanks{Babak Jabbar Nezhad has also published under the name Babak Jabarnejad~\cite{jabarnejad2016rees}}
\dedicatory{}
\begin{document}
\maketitle
\vspace{-7mm}
	
\begin{abstract}
Bishop's constructive mathematics school rejects the Law of Excluded Middle, but instead vastly makes use of weaker versions of the Choice. In this paper we pioneer an example, which shows that this road is not consistent, as our example provides a paradox. Therefore, rejecting the Law of Excluded Middle, and as an alternative using the Countable Axiom of Choice and the Axiom of Dependent Choice, still does not create a consistent structure. Actually, constructively; the Countable Axiom of Choice is an implication of the Axiom of Dependent Choice.    
\end{abstract}

\section{Introduction}
First of all, we should specify that this paper is written in the framework of Bishop's constructive mathematics. So that in entirety of this paper by constructive mathematics we mean Bishop's constructive mathematics. 

Both of the Law of Excluded Middle and the Axiom of Choice are accepted in classical mathematics but both are rejected in constructive mathematics. We are well aware that the Zermelo's Axiom of Choice revolutionized mathematics in the last century. Because of that even Bishop's school of constructive mathematicians could not be completely released from this axiom and broadly made use of weaker versions of this axiom, we mean; the Countable Axiom of Choice (CAC) and the Axiom of Dependent Choice (ADC). Specifically, rejecting the Law of Excluded Middle in constructive mathematics, led these mathematicians to apply CAC and ADC; more extensively than to what extent classical mathematicians used the Axiom of Choice itself. Therefore, in this perspective one could say that the blind scope of the classical mathematics is the Law of Excluded Middle and the blind part of the constructive mathematics is utilization of the Choice. Notice that the Axiom of Choice itself implies the Law of Excluded Middle. Therefore, the Axiom of Choice is rejected in constructive mathematics, but not weaker versions of this axiom as we explained above. Also, note that the great mathematician L.E.J. Brouwer did not mean to make any use of the Choice in his presentation of constructive mathematics. Though, this was Errett Bishop who started to interfere weaker versions of the Axiom of Choice in his presented constructive mathematics to extend results of classical mathematics to his wise interpretation of constructive mathematics.

In Bishop's constructive mathematics one rejects the Law of Excluded Middle, but they practice vastly CAC and ADC. For example using these axioms Errett Bishop builds the field of real and complex numbers and comprehensively develops one variable analysis on these fields (c.f.~\cite{BB}). The proposal is so simple; whenever one needs to use sequences in classical analysis they use proof by contradiction to prove existence of such desired sequences, which is an immediate consequence of the Law of Excluded Middle; also we may see every countably infinite set as a sequence. However, in constructive analysis one deals with sequences and they construct sequences as they are not allowed to use the Law of Excluded Middle, then they need to use Countable Choice and sometimes Dependent Choice. This leads constructive mathematicians to use either CAC or ADC. On the other hand, in constructive algebra working on fields of real, complex and algeb\textbf{}raic numbers one uses the concept of sequences to build these fields, therefore, they need constructive analysis and in the sequel they need either CAC or ADC.  In prolongation of this course of action, in Bishop's school of constructive mathematics they utilize CAC and ADC to prove that any multivariable polynomial ring over the field of algebraic numbers is a UFD, and also the field of algebraic numbers is algebraically closed; actually the last one is proved without utilization of the Choice in~\cite{R}. By the filed of algebraic numbers we mean complex numbers that are algebraic over the field of rational numbers.

What we manifest in the present paper is very short and clear by introducing a straightforward example. The example is similar to the one that we have presented in~\cite{BJN}, but this time in the framework of Bishop's constructive mathematics; with totally different arguments and different functions in used. The example takes place over the field of complex numbers but at the same time makes use of constructive properties of algebraic numbers. Roughly speaking, we introduce a nonzero differentiable function on a simply connected open set whose zeros are not isolated; certainly we introduce this example under constructive interpretation. We communicate and show that if we accept the following: some facts in constructive analysis and algebra,  the field of algebraic numbers is algebraically closed, and also a polynomial ring over the filed of algebraic numbers is a UFD, then we are led to a paradox. So that the Countable Axiom of Choice guides us to a paradox.    

Also, we should clarify that although in fuzzy theory -invented by the great scientist Aliasker Lotfi Zadeh- they keep distance from the Law of Excluded Middle, but in this theory arguments are tied to completeness of real numbers which supports this theory to make use of the concept of continuity in real analysis at least; see for example the concept of fuzzy real numbers or intersection of fuzzy sets. On the other hand, in one side real analysis; in this level; could be developed using the Law of Excluded Middle, on the other side it could be developed through making use of the Countable Choice. So that Bishop's constructive mathematics plays an important role in fuzzy theory. As a conservative estimate of our knowledge about the fuzzy theory and its current literature. In the fuzzy theory what they mean by a continuous function is not the same as Bishop's means. So that in this side there are confusing arguments in the fuzzy theory. Moreover, in the fuzzy theory they deal with many famous functions -for example in entropy- and we are able to define and construct these functions using either the Law of Excluded Middle or the Choice. At the end, in the fuzzy theory they reject the Law of Excluded Middle then they reject the following
\[
A\cap \overline{A}=\emptyset,\ A\cup\overline{A}=X,
\]
where $A$ is a set, $X$ is the universal set and the set $\overline{A}$ is the complement of $A$. But they accept the following
\[
\overline{\overline{A}}=A,\ \overline{A\cup B}=\overline{A}\cap\overline{B},\ \overline{A\cap B}=\overline{A}\cup\overline{B}.
\]
This is just ambiguity, as latter identities are implications of first identities.  

\section{A glimpse of foundation}\label{glimpse}
In this section we list some results in Bishop's constructive mathematics. Actually, in entirety of this paper by constructive mathematics we mean Bishop's constructive mathematics.

We say a set is discrete if the equality is decidable.

The Law of Excluded Middle is rejected in constructive mathematics, which asserts that $P\lor\neg P$ holds for any statement $P$, where $\neg P$ is the denial of $P$. 

\begin{Choice}
	If $S$ is a subset of $A\times B$, and for each $x$ in $A$ there exists $y$ in $B$ such that $(x,y)\in S$, then there is a function $f$ from $A$ to
	$B$ such that $(x,f(x))\in S$ for each $x$ in $A$.
\end{Choice}

\begin{c-Choice}
	This is the Axiom of Choice with $A$ being the set of positive integers.
\end{c-Choice}

\begin{d-Choice}
	Let $A$ be a nonempty set and $R$ be a subset of $A\times A$ such that for each $a$ in $A$ there is an element $a'$ in $A$ with $(a,a')\in R$. Then there is a sequence $a_0,a_1,\dots$ of elements of $A$ such that $(a_i,a_{i+1})\in R$ for each $i$.
\end{d-Choice}

The Axiom of Choice is not accepted in Bishop's constructive mathematics, as it implies the Law of Excluded Middle. CAC and ADC are accepted and widely are used in this school. Actually, the ADC implies the CAC.

\begin{remark}
Algebra is the manipulation of symbols without (necessarily) regard for their meaning. And fields in general are not purely algebraic notion. Although, a discrete field is an algebraic notion, in the sense that we do algebra in it, but a field that is not discrete is not an algebraic notion. Such as fields of real and complex numbers which are not discrete - from viewpoint of constructive mathematics -.
\end{remark}

\begin{notation}
	We denote the field of complex numbers by $\mathbb{C}$. We also denote the field of algebraic numbers by $\mathbb{C}^{\alpha}$. By the filed of algberaic numbers we mean complex numbers that are algebraic over the field of rational numbers. Finally, we denote the field of real numbers by $\mathbb{R}$. We also denote the field of real algebraic numbers by $\mathbb{R}^{\alpha}$.
\end{notation}

The open sphere of radius $r>0$ about a point $x$ in a metric space $X$ is the subset
\[
S(x,r):=\{y\in X; \rho(x,y)<r\}
\]
of $X$.

The complement of a set $S$ in $X$ is the set $X-S:=\{x\in X; \forall s\in S : x\neq s\}$. Note that the complement of an open set is a closed set.

Let $X$ be a metric space. We define the closed sphere for $r\ge 0$ to be
\[
Sc(x,r):=\{y\in X; \rho(x,y)\le r\}.
\]

Note that the closed sphere in $\mathbb{R}$ and $\mathbb{C}$ is compact.

An open set $U\subset\mathbb{C}$ is connected if any two points of $U$ can be joined by a path in $U$. We say that the open set $U$ is simply connected if it is connected and every closed path in $U$ is null-homotopic.

We don't go in detail over definitions of located set, compact set and et cetera in Bishop's constructive mathematics, as they are found in~\cite{BB}. We just mention the following

For each located set $K\subset\mathbb{C}$ and each $r>0$, we write
\[
K_r:=\{z\in\mathbb{C};\rho(z,K)\le r\}.
\] 
A totally bounded set $K\subset\mathbb{C}$ is well contained in an open set $U\subset\mathbb{C}$ if $K_r\subset U$ for some $r>0$. We then write $K\subset\subset U$. Note that if $K\subset\mathbb{C}$ is totally bounded, then $K_r$ is compact for each $r>0$.

\begin{lemma}\label{t-4}
Let $U\subset\mathbb{C}$ be open, and $f:U\rightarrow\mathbb{C}$ be uniformly continuous on each closed sphere well contained in $U$. Then $f$ is continuous on $U$.
\end{lemma}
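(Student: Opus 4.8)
The plan is to reduce the global continuity of $f$ to the local uniform continuity hypothesis by a routine covering-and-localization argument, being careful to use only constructively acceptable notions (located sets, total boundedness, well-containment) as set up in the excerpt. The statement we must prove is Lemma~\ref{t-4}: if $U\subset\mathbb{C}$ is open and $f:U\to\mathbb{C}$ is uniformly continuous on each closed sphere well contained in $U$, then $f$ is continuous on $U$. Recall that in Bishop's framework, continuity of $f$ on $U$ means: for each point $z_0\in U$ and each $\varepsilon>0$ there is $\delta>0$ such that $|f(z)-f(z_0)|<\varepsilon$ whenever $z\in U$ and $|z-z_0|<\delta$ (i.e.\ pointwise continuity, established uniformly on the appropriate bounded pieces). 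So the task is essentially to produce, around an arbitrary $z_0\in U$, a small closed sphere that is well contained in $U$, and then invoke the hypothesis there.

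First I would fix $z_0\in U$. Since $U$ is open, choose $R>0$ with $S(z_0,2R)\subset U$. I claim the closed sphere $K:=Sc(z_0,R)$ is well contained in $U$: indeed $K$ is a closed (hence located, and totally bounded, being a closed sphere in $\mathbb{C}$) subset, and $K_r=Sc(z_0,R+r)\subset S(z_0,2R)\subset U$ as soon as $0<r<R$. Hence $K\subset\subset U$. By the hypothesis, $f$ is uniformly continuous on $K$, so given $\varepsilon>0$ there is a modulus $\delta_0>0$ such that $|f(w)-f(w')|<\varepsilon$ for all $w,w'\in K$ with $|w-w'|<\delta_0$. Now set $\delta:=\min\{\delta_0,R\}$; then for any $z\in U$ with $|z-z_0|<\delta$ we have $z\in Sc(z_0,R)=K$ and of course $z_0\in K$, so $|f(z)-f(z_0)|<\varepsilon$. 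This establishes continuity of $f$ at $z_0$, and since $z_0\in U$ was arbitrary, $f$ is continuous on $U$. If the intended reading of ``continuous on $U$'' is the stronger Bishop notion of being uniformly continuous on every compact (totally bounded located) subset of $U$, I would instead start from an arbitrary totally bounded located $L\subset U$, use well-containment of $L$ in $U$ (which, for $L$ totally bounded, amounts to finding $r>0$ with $L_r\subset U$), note $L\subset L_r$ with $L_r$ compact and well contained in $U$ as well, and then transfer the uniform continuity of $f$ on $L_r$ directly to $L$.

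The only genuine point requiring care — and the step I expect to be the main obstacle — is the verification that a sufficiently small closed sphere about $z_0$ is \emph{well contained} in $U$ in the precise constructive sense: one must exhibit a concrete $r>0$ witnessing $K_r\subset U$, which forces one to quantify the openness of $U$ at $z_0$ into an explicit radius, and (in the uniform version) to know that every totally bounded located subset of an open set is well contained in it, i.e.\ has positive distance to the complement. In the bounded/sphere form used here this is immediate from the metric estimate above, and the rest is bookkeeping with moduli of continuity; no appeal to the Law of Excluded Middle or to any choice principle is needed, so the argument is fully constructive.
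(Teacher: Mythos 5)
The paper states Lemma~\ref{t-4} without proof, so your argument has to stand on its own, and the crux is what ``continuous on $U$'' must mean. In Bishop's setting (and in the way the lemma is used later, to feed the Cauchy--Riemann theorem for $\Lnn$ and $\Lnnn$), continuity on an open set $U$ means uniform continuity on every compact set well contained in $U$, not pointwise continuity. Your main argument --- choosing $R>0$ with $S(z_0,2R)\subset U$, checking $Sc(z_0,R)\subset\subset U$, and reading off continuity at $z_0$ from the modulus on that sphere --- is correct as far as it goes, but it only delivers the pointwise notion, which is strictly weaker than what the lemma is for.

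Your fallback for the stronger reading is where the genuine gap lies: you propose to take a totally bounded $L\subset U$ with $L_r\subset U$ and ``transfer the uniform continuity of $f$ on $L_r$ directly to $L$'', but the hypothesis only provides uniform continuity on \emph{closed spheres} well contained in $U$, and $L_r$ is in general not a closed sphere, so there is no uniform continuity on $L_r$ available to transfer. The missing idea is a finite covering-and-patching argument: given a compact $K\subset\subset U$ with $K_r\subset U$, take an $r/4$-approximation $\{z_1,\dots,z_n\}\subset K$; each sphere $Sc(z_i,r/2)$ satisfies $\bigl(Sc(z_i,r/2)\bigr)_{r/4}=Sc(z_i,3r/4)\subset K_r\subset U$, hence is a closed sphere well contained in $U$, so for a given $\varepsilon>0$ the hypothesis yields a modulus $\delta_i>0$ on it; setting $\delta:=\min\{\delta_1,\dots,\delta_n,r/4\}$, any $w,w'\in K$ with $|w-w'|<\delta$ lie in a common $Sc(z_i,r/2)$ (since $|w-z_i|\le r/4$ for some $i$ and then $|w'-z_i|\le r/2$), whence $|f(w)-f(w')|<\varepsilon$. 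Only finitely many moduli are chosen, so this is constructively unobjectionable; without this covering step the lemma, in the sense actually needed by the paper, is not established.
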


Now, we state the following theorem from~\cite{B}
\begin{theorem}\label{t-1}
	Let $f$ be analytic and not identically zero on the connected open set $U$. Let $K$ be a compact set well contained in $U$, and $\epsilon>0$. Then either $\inf\{|f(z)|; z\in K\}>0$ or there exist finitely many points $z_1,\dots,z_n$ of $U$ and an analytic function $g$ on $U$ such that
	\[
	f(z)=(z-z_1)(z-z_2)\dots(z-z_n)g(z),\ (z\in U),
	\]
	such that $\inf\{|g(z)|; z\in K\}>0$, and $\rho(z_n,K)<\epsilon$ for each $K$.
\end{theorem}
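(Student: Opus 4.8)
The plan is to read the statement as an \emph{effective} dichotomy: given $f$, $U$, $K$ and $\epsilon$, we must either exhibit a positive lower bound for $|f|$ on $K$ or produce $z_1,\dots,z_n$ and $g$. The first move is cotransitivity of the real line. By Lemma~\ref{t-4} the function $f$ is continuous on $U$, and since $K$ is compact the number $m:=\inf\{|f(z)|;z\in K\}$ exists and can be located to arbitrary accuracy; comparing a rational approximation of $m$ with $\epsilon$ lets us decide \emph{either} $m>0$ \emph{or} $m<\epsilon$. In the first case the first alternative of the theorem holds and we are finished, so from now on we assume $m<\epsilon$ and must construct the factorization.

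Before peeling off zeros one at a time one needs an a~priori bound on how many there can be, and this is where the argument principle enters. First enlarge $K$ to a compact region $L$ which is a finite union of closed squares with $K\subset\subset L$, $L\subset\subset U$, and $\rho(w,K)<\epsilon$ for every $w\in L$. Choose a one-parameter family of pairwise separated polygonal contours $\Gamma_t$ ($0<t\le 1$), each enclosing $L$ and lying, together with its interior, in a fixed compact subset of $U$; applying cotransitivity to the finitely many quantities $\inf_{\Gamma_t}|f|$ for a suitably chosen such family, and using that $f$ is not identically zero on the connected set $U$ (hence, by analytic continuation, not identically zero on any open subregion, so only finitely many contours can carry a zero), one secures a particular contour $\Gamma$ with $\inf_{\Gamma}|f|>0$. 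Then
\[
n:=\frac{1}{2\pi i}\oint_{\Gamma}\frac{f'(z)}{f(z)}\,dz
\]
is a computable nonnegative integer bounding the number of zeros of $f$ inside $\Gamma$, counted with multiplicity. Now recurse: since $m<\epsilon$ there is a point $z^{\ast}\in L$ where $|f|$ is small, and again by cotransitivity applied to the radii of concentric circles about $z^{\ast}$ (at most $n$ zeros lie inside $\Gamma$, so they cannot meet all of $n+1$ prescribed circles) we obtain a small circle $C$ with $\inf_{C}|f|>0$; the winding number $\frac{1}{2\pi i}\oint_{C}f'/f=:k\ge 1$ counts the zeros it encloses, which we locate as points $z_1,\dots,z_k$ by a root-approximation argument together with compactness, and then divide, $f(z)=(z-z_1)\cdots(z-z_k)f_1(z)$ with $f_1$ analytic on $U$. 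Since $f_1$ has strictly fewer zeros inside $\Gamma$, after at most $n$ steps we reach an analytic $g$ on $U$ with $\inf_{K}|g|>0$, $f(z)=(z-z_1)\cdots(z-z_n)g(z)$ on $U$, and $\rho(z_j,K)<\epsilon$ for each $j$.

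The hard part is precisely the step one would like but cannot have constructively: one cannot decide the order of vanishing of $f$ at a point, nor even whether $f$ vanishes there at all, so the usual "factor out $(z-z_0)^{\mathrm{ord}}$" is unavailable. The proof circumvents this by never referring to orders directly — it only evaluates the \emph{integer-valued} winding numbers $\frac{1}{2\pi i}\oint f'/f$ along contours on which $f$ has been \emph{verified}, via cotransitivity, to be bounded away from $0$, and then locates exactly the zeros those integrals have already counted. Carrying out the contour selection and the zero-location so that they are uniform and effective in the data, and checking that each division yields a genuinely analytic quotient on all of $U$, is where the remaining work lies; the complete argument is in~\cite{B}.
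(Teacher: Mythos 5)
First, a point of comparison: the paper itself offers no proof of Theorem~\ref{t-1} at all --- it is quoted verbatim from Bridges~\cite{B} --- so the only internal ``proof'' is the citation, and your proposal, which also defers ``the remaining work'' to~\cite{B}, is in the end an annotated citation rather than an independent argument. That said, your outline (cotransitivity to split off the case $\inf_K|f|>0$; winding numbers $\frac{1}{2\pi i}\oint f'/f$ over contours on which $|f|$ has a verified positive lower bound; recursive peeling of zeros and division) is the right general shape of the constructive proof, so it is worth saying precisely where it falls short of being one.

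The genuine gap is the contour-selection step. You justify the existence of a contour $\Gamma$ with $\inf_\Gamma|f|>0$ by saying that, since $f$ is not identically zero, ``only finitely many contours can carry a zero.'' That is a classical argument: it presupposes that the zeros of a not-identically-zero analytic function are isolated, hence finite in a compact region --- which is exactly the content Theorem~\ref{t-1} is designed to supply constructively (note the title of~\cite{B}). Moreover, even granting that a given contour ``carries no zero,'' constructively that does not by itself yield $\inf_\Gamma|f|>0$; one cannot run an excluded-middle case analysis on where the zeros lie. The constructive substitute is a quantitative selection lemma (from a family of concentric circles or separated polygonal contours, produce one with an explicit positive lower bound for $|f|$), proved by estimates on $f$, and this is one of the two technical hearts of Bridges' proof. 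The other is the step you describe as ``locate as points $z_1,\dots,z_k$ by a root-approximation argument together with compactness'': extracting exact points $z_i\in U$ from winding-number data requires a nested localization procedure, a limit argument using completeness (and, in this framework, countable/dependent choice), and a verification that each division produces a function analytic on all of $U$, not merely inside $\Gamma$. You name these issues honestly in your last paragraph, but naming them and then citing~\cite{B} leaves the proposal without a proof of precisely the steps that make the theorem constructive; a minor further blemish is that you compare $\inf_K|f|$ with the theorem's $\epsilon$, which in the statement measures only the distance $\rho(z_i,K)$ of the produced zeros from $K$, not the size of $|f|$ (harmless, but it conflates two roles of $\epsilon$). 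If you want a self-contained write-up, the two items to supply are the quantitative contour-selection lemma and the zero-localization-plus-division argument.
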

Note that on an open set analytic condition is weaker than differentiable condition in constructive complex analysis.

We say the function $f:\mathbb{C}\rightarrow\mathbb{C}$ is an entire function if it is differentiable on $\mathbb{C}$.

Functions of
\[
\begin{array}{cc}
f:\mathbb{C}\rightarrow\mathbb{C},&f(z)=\cos(z),\\
g:\mathbb{C}\rightarrow\mathbb{C},&g(z)=\sin(z),\\
h:\mathbb{C}\rightarrow\mathbb{C},&h(z)=\expp(z),
\end{array}
\]
are entire functions, where for $z\in\mathbb{C}$ one defines
\[
\begin{array}{cc}
\expp(x+\I y)=\exp(x)(\cos(y)+\I\sin(y)),&\text{where},\ x,y\in\mathbb{R},\\
\cos(z)=\frac{\expp(\I z)+\expp(-\I z)}{2},&\sin(z)=\frac{\expp(\I z)-\expp(-\I z)}{2\I}.
\end{array}
\]

A complex number $w$ is a logarithm of a complex number $z$ if $\expp(w)=z$. Every complex number $z\neq 0$ has at least one logarithm $w$, and numbers
\[
w+2\pi\I k\ (k\in\mathbb{Z})
\]
form the totality of logarithms of $z$. If $U$ is simply connected open subset $\mathbb{C}-\{0\}$ (this is the metric complement which the same as constructive complement mentioned above), $z_0$ is any point of $U$, and $a$ is any logarithm of $z_0$, then there exists a unique differentiable function $g$ on $U$, with $g'(z)=z^{-1}$ and $g(z_0)=a$. Then we have $\expp(g(z))=z$, where $z\in U$. The function $g$ is called a branch of the logarithmic function on $U$.

If $f$ is a differentiable function from an open subset $U$ of $\mathbb{C}$ into an open subset $V$ of $\mathbb{C}$, and $g$ is differentiable on $V$, then $g\circ f$ is differentiable on $U$ if $f(K)\subset\subset V$ for each compact set $K\subset\subset U$. 

Clearly, if $g$ is an entire function and $f$ is differentiable on the open set $U$, then mentioned conditions above are automatically satisfied; so that $g\circ f$ is differentiable on $U$.

\begin{theorem}
Let $f$ be a continuous function on an open set $U\subset\mathbb{C}$, with partial derivatives $f_x$ and $f_y$ on $U$ satisfying the Cauchy-Riemann equations
\[
f_y=\I f_x.
\]
Then $f$ is differentiable on $U$, and $f'=f_x$.
\end{theorem}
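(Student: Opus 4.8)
The plan is to prove differentiability directly from the definition, using the fundamental theorem of the integral calculus (which is available constructively, see~\cite{BB}) rather than any mean value theorem, since the latter is not constructively valid in general. First I would fix a point $z_0 = x_0 + \I y_0 \in U$ and choose $r > 0$ with the closed sphere $Sc(z_0, r) \subset U$. For an increment $h = s + \I t$ with $|h| < r$, I would write the difference along the axis-parallel (``$L$-shaped'') path from $z_0$ to $z_0 + h$,
\[
f(z_0 + h) - f(z_0) = \bigl[f(x_0 + s, y_0 + t) - f(x_0, y_0 + t)\bigr] + \bigl[f(x_0, y_0 + t) - f(x_0, y_0)\bigr].
\]
The whole path remains inside $Sc(z_0, r)$, so the one-variable maps $\sigma \mapsto f(x_0 + \sigma, y_0 + t)$ and $\tau \mapsto f(x_0, y_0 + \tau)$ are continuously differentiable, with derivatives $f_x(x_0 + \sigma, y_0 + t)$ and $f_y(x_0, y_0 + \tau)$ respectively; applying the fundamental theorem of calculus to each bracket yields
\[
f(z_0 + h) - f(z_0) = \int_0^s f_x(x_0 + \sigma, y_0 + t)\, d\sigma + \int_0^t f_y(x_0, y_0 + \tau)\, d\tau.
\]

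Next I would isolate the principal part. Adding and subtracting the values of $f_x$ and $f_y$ at $z_0$ and invoking the Cauchy--Riemann equation $f_y(z_0) = \I f_x(z_0)$, the principal terms combine to $s\,f_x(z_0) + \I t\,f_x(z_0) = h\,f_x(z_0)$, leaving
\[
f(z_0 + h) - f(z_0) - h\,f_x(z_0) = \int_0^s \bigl[f_x(x_0 + \sigma, y_0 + t) - f_x(z_0)\bigr]\, d\sigma + \int_0^t \bigl[f_y(x_0, y_0 + \tau) - f_y(z_0)\bigr]\, d\tau.
\]
Because $f_x$ is continuous and $f_y = \I f_x$, given $\epsilon > 0$ there is $\delta \in (0, r]$ such that $|f_x(z) - f_x(z_0)| < \epsilon$ whenever $|z - z_0| < \delta$; for $|h| < \delta$ every point of the $L$-shaped path lies within $\delta$ of $z_0$, so the right-hand side above is bounded in modulus by $|s|\epsilon + |t|\epsilon \le 2|h|\epsilon$. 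Hence $|f(z_0 + h) - f(z_0) - h\,f_x(z_0)| \le 2|h|\epsilon$, which is exactly differentiability of $f$ at $z_0$ with $f'(z_0) = f_x(z_0)$. To upgrade this to differentiability on $U$ in the constructive (locally uniform) sense, I would run the same estimate with $z_0$ ranging over a closed sphere $K$ well contained in $U$: enlarging $K$ a little to a closed sphere $K' \subset\subset U$, the partial derivative $f_x$ is uniformly continuous on $K'$ (see Lemma~\ref{t-4}), so $\delta$ can be taken independent of $z_0 \in K$ while the $L$-shaped path from $z_0$ still lies in $K'$; the bound $|f(z_0 + h) - f(z_0) - h\,f_x(z_0)| \le 2|h|\epsilon$ then holds uniformly for $z_0 \in K$, and $f' = f_x$ is continuous by hypothesis.

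I expect the only delicate point to be interpretive rather than computational: one must read ``partial derivatives $f_x$ and $f_y$ on $U$'' in Bishop's sense, namely as continuous (indeed locally uniformly continuous) functions $U \to \mathbb{C}$ that realize the difference-quotient limits, so that the fundamental theorem of calculus and uniform continuity on compact sets are genuinely at one's disposal. Under that reading the computation above is routine. Were one instead to assume only the pointwise existence of possibly non-continuous partials, the statement would require a Looman--Menchoff type analysis, which is neither needed here nor constructively tractable. So the ``hard part'' is simply to pin down, at the outset, the constructive meanings of ``partial derivative'' and of ``differentiable on an open set''; once those are fixed, the fundamental-theorem-of-calculus computation closes the proof.
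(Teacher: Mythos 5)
Your argument is correct, and in fact this theorem is stated in the paper only as background (Section~\ref{glimpse} is a list of results quoted from~\cite{BB}), so there is no in-paper proof to compare against; your $L$-shaped-path decomposition, the fundamental theorem of calculus applied to each segment, and the uniform estimate $|f(z_0+h)-f(z_0)-h\,f_x(z_0)|\le 2|h|\epsilon$ over a compact sphere is precisely the standard constructive proof from Bishop--Bridges, and your interpretive remark that $f_x$, $f_y$ must be partial derivatives in Bishop's (locally uniform, hence continuous) sense is the right thing to pin down. One small correction: the appeal to Lemma~\ref{t-4} for uniform continuity of $f_x$ on the enlarged sphere $K'$ is pointed the wrong way --- that lemma passes from uniform continuity on closed spheres to continuity on $U$, whereas what you need is the converse, which in Bishop's framework is simply the definition of continuity on an open set (uniform continuity on each compact set well contained in $U$); citing that definition closes the argument without any gap.
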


\section{Some facts in constructive mathematics}

In the following we only use the Countable Axiom of Choice and the Axiom of Dependent Choice. And there is no use of the Law of Excluded Middle or any implications of it. As, all proofs are made in the framework of Bishop's constructive mathematics. We constructively prove some new results in which we did not find in the literature although we have used results in the literature of constructive mathematics to prove our results.

\begin{theorem}\label{dfta}
	Let $p(x)=a_nx^n+\dots+a_0$ be a polynomial with $a_i\in\mathbb{C}^{\alpha}$. Then we have $p(x)=a_n(x-x_1)\dots(x-x_n)$, where $x_i\in\mathbb{C}^{\alpha}$.
\end{theorem}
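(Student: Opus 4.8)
The plan is to deduce this "constructive Fundamental Theorem of Algebra over $\mathbb{C}^\alpha$" from two ingredients already available in the literature of constructive mathematics: first, that $\mathbb{C}^\alpha$ is algebraically closed (cited in the introduction from~\cite{R}, and provable without any choice), and second, an induction on the degree $n$ of the polynomial. The delicate point is not the existence of \emph{one} root — that is precisely what algebraic closure gives us — but rather extracting the full factorization while keeping all the roots $x_i$ inside $\mathbb{C}^\alpha$ and, more subtly, making the division step constructively legitimate.

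First I would treat the base case $n=0$ (or $n=1$) trivially. For the inductive step, assume the result for degree $n-1$ and let $p(x)=a_nx^n+\dots+a_0$ with $a_n\neq 0$. Since $\mathbb{C}^\alpha$ is algebraically closed and $p$ is nonconstant, there exists $x_1\in\mathbb{C}^\alpha$ with $p(x_1)=0$. Then I would perform polynomial division: write $p(x)=(x-x_1)q(x)+r$ with $q(x)\in\mathbb{C}^\alpha[x]$ of degree $n-1$ and $r\in\mathbb{C}^\alpha$; this division is purely formal and algebraic, hence unproblematic constructively since $\mathbb{C}^\alpha[x]$ is a commutative ring and the leading coefficient of $x-x_1$ is a unit. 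Evaluating at $x_1$ gives $r=p(x_1)=0$, so $p(x)=(x-x_1)q(x)$, and the leading coefficient of $q$ is $a_n$. Applying the inductive hypothesis to $q$ yields $q(x)=a_n(x-x_2)\dots(x-x_n)$ with $x_i\in\mathbb{C}^\alpha$, and multiplying through finishes the step.

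The step I expect to be the real obstacle is ensuring that everything stays genuinely constructive. Two things need care: (i) the use of algebraic closure of $\mathbb{C}^\alpha$ must be the constructive version, i.e.\ a root is actually \emph{produced}, not merely shown not to be absent — this is exactly the content we are permitted to cite; and (ii) the induction is an induction over the natural-number degree $n$, which is legitimate, but one must check that "$a_n\neq 0$" is used in the strong (apartness) sense $\rho(a_n,0)>0$ appropriate to constructive analysis, so that the leading coefficient of each successive quotient is again apart from $0$ and the degree genuinely drops. I would remark explicitly that $\mathbb{C}^\alpha$ is a discrete field (equality of algebraic numbers is decidable), which makes these apartness conditions decidable and the polynomial arithmetic fully effective, so no appeal to the Law of Excluded Middle is needed. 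Once these points are isolated, the argument is a routine finite induction and no use of CAC or ADC is required for \emph{this} theorem — the choice principles enter only in the later construction over $\mathbb{C}$.
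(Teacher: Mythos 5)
Your argument is correct, and it is essentially the only route the paper itself has in mind: the paper states Theorem~\ref{dfta} without any proof, implicitly relying on the constructive algebraic closure of $\mathbb{C}^{\alpha}$ (cited from Richman~\cite{R} in the introduction), which is precisely the ingredient you invoke, so your root-extraction, division, and induction on the degree simply supply the routine details the paper leaves unstated. The one caveat is that the printed statement omits the hypothesis $a_n\neq 0$, without which the factorization cannot hold for a nonzero $p$; since $\mathbb{C}^{\alpha}$ is discrete this condition is decidable, and your explicit remark on discreteness already addresses the point.
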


By discussed arguments in~\cite[Chapter 2, Section 7]{BB}, we have the following
\[
\begin{array}{cc}
(1) \ln(xy)=\ln(x)+\ln(y),& x,y\in\mathbb{R}, x> 0, y> 0,\\
(2) \ln(x/y)=\ln(x)-\ln(y),&\ x,y\in\mathbb{R}, x>0, y> 0,\\
(3) \exp(x+y)=\exp(x)\exp(y),& x,y\in\mathbb{R}.\\
(4) \exp(x)>0,& x\in\mathbb{R},\\
(5) \exp(-x)=1/\exp(x),& x\in\mathbb{R},\\
(6) \exp(x-y)=\exp(x)/\exp(y),& x,y\in\mathbb{R}.\\
(7) \exp(\ln(x))=x,& x\in\mathbb{R}, x>0.
\end{array}
\] 
Also, for every $a\in\mathbb{R}$, $a>0$, they define
\begin{gather}\label{contable-Choice-root}
a^x:=\exp(x\ln(a)),\ x\in\mathbb{R}.
\end{gather}
So that if $0<a\in\mathbb{R}$, then $\sqrt{a}>0$.

If $a+b=1$, and $a,b\ge 0$, then by~\cite[Propsition 2.6]{BB}, $(a+b)-b=a+(b-b)=a+0=a=1-b\ge 0$. Hence by~\cite[Propsition 2.11]{BB}, $b\le 1$. Similarly we have $a\le 1$. Hence we conclude that $0\le a,b\le 1$.

If $a,b\in\mathbb{R}^{\alpha}$, and $ab<0$, then $ab\neq 0$. Since $\mathbb{R}^{\alpha}$ is a discrete field, we have $a\neq 0$ and $b\neq 0$. Now, since $\mathbb{R}^{\alpha}$ is discrete we have $a>0$ or $a<0$. If $a>0$, then by~\cite[Propsition 2.11]{BB}, $a^{-1}>0$. Hence, by\cite[Propsition 2.11]{BB}, we have $aa^{-1}b<0\Rightarrow b<0$. If $a<0$, then by~\cite[Propsition 2.6, Proposition 2.11]{BB}, we have $0=a+(-a)<0+(-a)\Rightarrow-a>0$. Thus $(-a)^{-1}>0$. On the other hand, we have $ab<0\Rightarrow (-a)(-b)<0\Rightarrow -b<0\Rightarrow b>0$. Therefore, we proved the following. If $a,b\in\mathbb{R}^{\alpha}$ and $ab<0$, then either ($a>0$ and $b<0$) or ($a<0$ and $b>0$).

Suppose that $a,b\in\mathbb{R}^{\alpha}$, and $a^2+b^2=1$. Then by what we have said above we have $0\le a^2\le 1\Rightarrow a^2-1\le 0\Rightarrow (a-1)(a+1)\le 0$. Then we conclude that $(a-1)(a+1)=0$ or $(a-1)(a+1)<0$. If $(a-1)(a+1)=0$, then $a=1$ or $a=-1$. If $(a-1)(a+1)<0$, then by what we have said we have $-1<a<1$. Therefore, we have proved that if $a,b\in\mathbb{R}^{\alpha}$ and $a^2+b^2=1$, then $-1\le a,b\le 1$.

Now if we observe \cite[Chapter 2, Section 7, page 58-61]{BB}, then we see that on these pages authors define functions sine and cosine. They also construct the number $\pi$ as twice the first positive zero of the cosine function. They also prove identities $\sin(x+y)=\sin(x)\cos(y)+\cos(x)\sin(y)$, $\cos(x+y)=\cos(x)\cos(y)-\sin(x)\sin(y)$ and $\cos(x)^2+\sin(x)^2=1$, which lead us to obtain all elementary trigonometric identities related to sine, cosine and the number $\pi$. The function $f(x)=\sin{x}$ is a strictly increasing function from $(-\pi/2,\pi/2)$ onto $(-1,1)$. Since $\sin0=0$, for every $a\in\mathbb{R}^{\alpha}$, $-1<a<0$, there is a unique $-\pi/2<\theta_1<0$, such that $\sin{\theta_1}=a$, and for every $a\in\mathbb{R}^{\alpha}$, $0<a<1$, there is a unique $0<\theta_1<\pi/2$, such that $\sin{\theta_1}=a$. Moreover, for every $a\in\mathbb{R}^{\alpha}$, $-1<a<0$, there is a unique $-\pi<\theta_1<-\pi/2$, such that $\sin{\theta_1}=a$, and for every $a\in\mathbb{R}^{\alpha}$, $0<a<1$, there is a unique $\pi/2<\theta_1<\pi$, such that $\sin{\theta_1}=a$. We have a similar argument for the function $f(x)=\cos{x}$. On the other hand, we have $\cos(-\pi/2)=\cos(\pi/2)=0$, $\sin0=\sin(\pi)=0$, $\sin(-\pi/2)=-1$, $\sin(\pi/2)=1$, $\cos(\pi)=-1$, $\cos0=1$. Therefore, for every  $a\in\mathbb{R}^{\alpha}$, $-1<a<0$ (resp. $0<a<1$), there are exactly two $-\pi<\theta_1,\theta_2\le\pi$, such that $\sin{\theta_1}=\sin{\theta_2}=a$, and there are exactly two  $-\pi<\gamma_1,\gamma_2\le\pi$, such that $\cos{\gamma_1}=\cos{\gamma_2}=a$. Now, suppose that $a,b\in\mathbb{R}^{\alpha}$, and $a^2+b^2=1$. Hence $-1\le a\le 1$. We consider four different cases. Case 1, $a=0$. We have $\sin(\pi)=\sin0=a=0$. Thus $b=\pm 1$. If $b=1$, then we take $\theta=0$, and we have $\sin(\theta)=a=0$, and $\cos(\theta)=b=1$. If $b=-1$, then we take $\theta=\pi$, and we have  $\sin(\theta)=a=0$, and $\cos(\theta)=b=-1$. Case 2, $a=1$. Then $b=0$. We take $\theta=\pi/2$, and we have $\sin(\theta)=a=1$, $\cos(\theta)=b=0$.  Case 3, $a=-1$. Then $b=0$. We take $\theta=-\pi/2$, and we have $\sin(\theta)=a=-1$, $\cos(\theta)=b=0$. Case 4, $a\neq 0,1,-1$. Hence $-1<a<0$ or $0<a<1$. If $-1<a<0$, then there are exactly two $-\pi<\theta_1<-\pi/2,\quad -\pi/2<\theta_2<0$, such that $\sin(\theta_1)=\sin(\theta_2)=a$. But we have the identity $\sin^2(x)+\cos^2(x)=1$$\Rightarrow$ $\cos^2(\theta_1)+a^2=1=\cos^2(\theta_2)+a^2\Rightarrow b^2=\cos^2(\theta_1)=\cos^2(\theta_2)$. On the other hand, $-1<b<0$ or $0<b<1$. If $-1<b<0$, then we pick $-\pi<\theta_1<-\pi/2$, and we know that $-1<\cos(\theta_1)<0$. So that $b=\cos(\theta_1)$. If $0<b<1$, then we pick $-\pi/2<\theta_2<0$, and we know that $0<\cos(\theta_2)<1$. So that $b=\cos(\theta_2)$. On the other hand, $b^2=A^2$ has only two values for $b$. Then in either cases $\theta_1$ or $\theta_2$ is unique. If $0<a<1$, we have a similar argument. 

Therefore, we have the following. For every $c+\I d$, where $c,d\in\mathbb{R}^{\alpha}$ and $c^2+d^2=1$, there is a unique $\theta\in(-\pi,\pi]$ such that $c+\I d=\cos\theta+\I\sin\theta$. 

Now if $c,d\in\mathbb{R}^{\alpha}$, and they are arbitrary and at least one of them is nonzero, then we have $c+\I d=\sqrt{c^2+d^2}\left(\frac{c}{\sqrt{c^2+d^2}}+\I\frac{d}{\sqrt{c^2+d^2}}\right)$, where $-1\le \frac{c}{\sqrt{c^2+d^2}},\frac{d}{\sqrt{c^2+d^2}}\le 1$. Therefore, for each $c,d\in\mathbb{R}^{\alpha}$, we have $c+\I d=r(\cos\theta+\I\sin\theta)$, where  $\theta\in(-\pi,\pi]$ is unique, and $r\in\mathbb{R}^{\alpha}$, $r\ge 0$. 

For $c+\I d=r(\cos{\theta}+\I\sin{\theta})$, $r>0$, $-\pi<\theta\le\pi$, we denote $\theta$ as the principal argument of the algebraic number, and we write $\Arg(c+\I d)=\theta$. 

For $c+\I d=r(\cos{\theta}+\I\sin{\theta})$, $-\pi<\theta\le\pi$, $r>0$, we call $r$ the absolute value of $c+\I d$ and we denote it by $|c+\I d|$.

We state the \textit{complex conjugate root theorem} in our desired context which is valid in constructive mathematics.
\begin{theorem}
If $p(x)$ is a polynomial over the field of rational numbers, and $a+\I b$ is a root of $p(x)$, where $a,b\in\mathbb{R}$, then its conjugate $a-\I b$ is also a root of $p(x)$.
\end{theorem}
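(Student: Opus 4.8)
The plan is to reduce the statement to the observation that complex conjugation $\kappa(u+\I v):=u-\I v$ is, constructively, a ring endomorphism of $\mathbb{C}$ that fixes every real number, and in particular every rational number. Granting this, if $p(x)=c_mx^m+\dots+c_1x+c_0$ with each $c_j$ rational and $z=a+\I b$ is a root, then applying $\kappa$ termwise gives $\kappa(p(z))=p(\kappa(z))$, and since $p(z)=0$ we obtain $p(a-\I b)=p(\kappa(z))=\kappa(0)=0$.

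First I would fix the above notation, viewing each rational $c_j$ as an element of $\mathbb{R}\subset\mathbb{C}$ through the standard embedding, and set $\bar z:=\kappa(z)=a-\I b$. The goal is the genuine equality $p(\bar z)=0$ in $\mathbb{C}$, i.e.\ that both the real and imaginary parts of the complex number $p(\bar z)$ vanish as real numbers.

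Next I would record the elementary properties of $\kappa$, reading them off from the componentwise description of addition and multiplication on $\mathbb{C}=\mathbb{R}\times\mathbb{R}$ recalled in Section~\ref{glimpse}: for all $z,w\in\mathbb{C}$ one has $\kappa(z+w)=\kappa(z)+\kappa(w)$ and $\kappa(zw)=\kappa(z)\kappa(w)$; moreover $\kappa(\kappa(z))=z$, $\kappa(0)=0$, and $\kappa(t)=t$ whenever $t\in\mathbb{R}$. Each identity is simply an equality between the real and imaginary parts of its two sides, and therefore follows from the commutative-ring axioms for $\mathbb{R}$, which hold in Bishop's framework; in particular no instance of the Law of Excluded Middle and no choice principle is used, and $\kappa$ is a function, so it automatically respects the equality relation of $\mathbb{C}$. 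A one-line induction on $j$ then yields $\kappa(z^j)=\kappa(z)^j$ for every $j\in\mathbb{N}$.

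Finally, combining these facts, $\kappa(p(z))=\kappa(\sum_{j=0}^m c_j z^j)=\sum_{j=0}^m \kappa(c_j)\kappa(z)^j=\sum_{j=0}^m c_j\bar z^{\,j}=p(\bar z)$, and since $z$ is a root, $p(\bar z)=\kappa(p(z))=\kappa(0)=0$, which is the assertion. The only place demanding any care — and it is the "obstacle", such as it is — is the previous paragraph: one must check that conjugation is genuinely compatible with the constructively defined arithmetic and equality of $\mathbb{C}$, rather than with some decidable surrogate. Once that compatibility is established the argument is purely formal and, notably, invokes neither CAC nor ADC.
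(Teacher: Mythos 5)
Your proposal is correct, and in fact the paper itself offers no proof at all: it merely states the complex conjugate root theorem as ``valid in constructive mathematics'' and immediately uses it, so there is nothing to compare your argument against. The route you take is the standard one and it is constructively unobjectionable: conjugation $\kappa(u+\I v)=u-\I v$ is defined componentwise on $\mathbb{C}=\mathbb{R}\times\mathbb{R}$, the identities $\kappa(z+w)=\kappa(z)+\kappa(w)$, $\kappa(zw)=\kappa(z)\kappa(w)$, $\kappa(t)=t$ for real $t$ are plain equalities of real numbers following from the ring axioms of $\mathbb{R}$ (no excluded middle, no choice), well-definedness with respect to the equality of $\mathbb{C}$ is immediate since $v=v'$ gives $-v=-v'$, and the induction $\kappa(z^j)=\kappa(z)^j$ plus $\kappa(0)=0$ yields $p(a-\I b)=\kappa(p(a+\I b))=0$. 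So your write-up supplies a proof the paper omits; the only refinement worth making explicit is the one you already flag, namely that all the verifications are equalities (not decidability claims) and hence need no surrogate decidable equality on $\mathbb{C}$.
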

Therefore, from \textit{complex conjugate root theorem} we conclude that the absolute value of an algebraic number is also an algebraic number. Actually, we get more which says if  $z=r(\cos{\theta}+\I\sin{\theta})$ is an algebraic number, then $\cos{\theta}$ and $\sin{\theta}$ are also algebraic numbers; certainly beside $r$. 

By~\cite[Definition 2.8]{BB}, $\expp(x+\I y)=\exp(x)(\cos{y}+\I\sin{y})$, where $x,y\in\mathbb{R}$. For $z_1,z_2\in\mathbb{C}$, they prove that $\expp(z_1+z_2)=\expp(z_1)\expp(z_2)$. Note that if $z\in\mathbb{R}$, then the exp function defined on $\mathbb{R}$ coincides with the defined \textbf{exp} function on $\mathbb{C}$. Therefore,, for the algebraic number $c+\I d=r(\cos{\theta}+\I\sin{\theta})$; we are able to use the notation as $c+\I d=r\expp(\I\theta)$, where $-\pi<\theta\le\pi$. So that for two algebraic numbers $\expp(\I\theta_1),\ \expp(\I\theta_2)$ we have 
\[
\expp(\I\theta_1)\expp(\I\theta_2)=\expp(\I(\theta_1+\theta_2)),\ \expp(-\I\theta_1)=\frac{1}{\expp(\I\theta_1)}.
\] 

Now, we define the natural complex logarithmic function $\Lnn$ on $A:=\{x+\I y;\ x>0\}$ as
\[
\Lnn(z):=\ln(\sqrt{x^2+y^2})+\I\arcsin\left(\frac{y}{\sqrt{x^2+y^2}}\right).
\]
If we apply Lemma~\ref{t-4}, and using  Cauchy-Riemann equations, then we conclude that the function $\Lnn$ is differentiable on $A$. And we see that for the algebraic number $z=r\expp(\I\theta)$ we have $\Lnn(r\expp(\I\theta))=\ln(r)+\I\theta$, where $-\pi<\theta\le\pi$, where $-\pi<\theta\le\pi$ is the principal argument. And using the principal argument for two algebraic numbers $z_1$ and $z_2$ we have the following
\[
\begin{array}{cc}
\Lnn(z_1z_2)=\Lnn(z_1)+\Lnn(z_2)+2\I k\pi,&k=0,1,-1,\\
\Lnn(z_1/z_2)=\Lnn(z_1)-\Lnn(z_2)+2\I k\pi,&k=0,1,-1.
\end{array}
\]
Note that above identities remain well-defined when we work with the original function of
\[
\Lnn(z)=\ln(\sqrt{x^2+y^2})+\I\arcsin\left(\frac{y}{\sqrt{x^2+y^2}}\right).
\] 

Now, for a complex number $x+\I y$, we define the natural complex logarithmic function $\Lnnn$ on $B:=\{x+\I y;\ x<0\}$ as
\[
\Lnnn(z):=\ln(\sqrt{x^2+y^2})+\I\arccos\left(\frac{y}{\sqrt{x^2+y^2}}\right).
\] 
If we apply Lemma~\ref{t-4}, and using  Cauchy-Riemann equations, then we conclude that the function $\Lnnn$ is differentiable on $B$. And we see that for the algebraic number $z=r\expp(\I\theta)$ we have $\Lnnn(r\expp(\I\theta))=\ln(r)-\I\theta+\I\pi/2$, where $-\pi<\theta\le\pi$ is the principal argument. 

Now, we apply Theorem~\ref{t-1}, to prove the following version related to algebraic numbers.

\begin{theorem}\label{algebraic-zeros}
Let $f$ be analytic and not identically zero on the connected open set $U$. Let $K$ be a compact set well contained in $U$ and $f$ is not identically zero on $K$. Then either $\inf\{|f(z)|; z\in K\}>0$, or for only finitely many algebraic numbers $x_i\in K$ we have $f(x_i)=0$.
\end{theorem}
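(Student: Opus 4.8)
The plan is to reduce the statement directly to Theorem~\ref{t-1}. Fix any $\epsilon>0$ (its precise value is irrelevant here) and apply Theorem~\ref{t-1} to $f$, $U$ and the given compact $K\subset\subset U$. This produces a dichotomy: either $\inf\{|f(z)|;z\in K\}>0$, which is exactly the first alternative and we are finished; or there are finitely many points $z_1,\dots,z_n\in U$ and an analytic function $g$ on $U$ with
\[
f(z)=(z-z_1)(z-z_2)\cdots(z-z_n)\,g(z)\qquad(z\in U)
\]
and $\inf\{|g(z)|;z\in K\}>0$. So it remains to treat this second case and extract from it the conclusion that $f$ has only finitely many algebraic zeros in $K$.

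Set $P(z):=(z-z_1)\cdots(z-z_n)$, a monic polynomial of degree $n$ with (a priori only) complex coefficients. If $x\in K\cap\mathbb{C}^{\alpha}$ satisfies $f(x)=0$, then $P(x)\,g(x)=0$; since $|g(x)|\ge\inf\{|g(z)|;z\in K\}>0$, the number $g(x)$ is invertible in $\mathbb{C}$, hence $P(x)=0$. Here is the one place where working with \emph{algebraic} zeros is essential: constructively one may \emph{not} conclude from $P(x)=0$ that $x=z_i$ for some $i$, because over $\mathbb{C}$ vanishing of a product does not decide which factor vanishes. Instead I would argue by a Vandermonde/interpolation bound. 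Suppose $x_0,x_1,\dots,x_n$ are $n+1$ pairwise distinct elements of $K\cap\mathbb{C}^{\alpha}$ with $f(x_j)=0$ for all $j$; then $P(x_j)=0$ for $j=0,\dots,n$, so the coefficient vector of $P$ lies in the kernel of the Vandermonde matrix $\bigl(x_j^{k}\bigr)_{0\le j,k\le n}$. Since the algebraic numbers form a discrete field, distinctness of the $x_j$ means each $x_j-x_k$ ($j\neq k$) is a nonzero algebraic number, hence invertible in $\mathbb{C}^{\alpha}$ and a fortiori in $\mathbb{C}$; therefore the Vandermonde determinant $\prod_{0\le j<k\le n}(x_k-x_j)$ is invertible and the kernel is trivial. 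This contradicts $P$ being monic of degree $n$ (its leading coefficient is $1\neq 0$). Hence no $n+1$ pairwise distinct algebraic zeros of $f$ in $K$ can exist, so $f$ has at most $n$ algebraic zeros in $K$ — finitely many — which is the second alternative.

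I expect the only real obstacle to be exactly the point flagged above, namely circumventing the classically trivial but constructively invalid inference ``a product of complex numbers is zero $\Rightarrow$ one of the factors is zero.'' The fix is precisely to use that the candidate zeros lie in the discrete field $\mathbb{C}^{\alpha}$, so that differences of distinct such numbers are invertible and the Vandermonde argument applies; the remaining steps are routine. One should also read ``finitely many'' in the constructively appropriate sense: what is produced is an explicit a priori bound (the integer $n$ coming from Theorem~\ref{t-1}, depending on $f$ and $K$) on the number of distinct algebraic zeros of $f$ in $K$, rather than an enumeration of them. The hypothesis that $f$ is not identically zero on $K$ is available if one wishes to exclude the degenerate reading of the second alternative, but the argument above already yields the bound $n$ straight from the factorization given by Theorem~\ref{t-1}.
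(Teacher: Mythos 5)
Your proposal is correct, and it opens the same way as the paper: apply Theorem~\ref{t-1} to get either $\inf\{|f(z)|;z\in K\}>0$ or a factorization $f=P\cdot g$ with $P(z)=(z-z_1)\cdots(z-z_n)$ monic and $\inf\{|g(z)|;z\in K\}>0$, and then use $|g(x)|>0$ to pass from $f(x)=0$ to $P(x)=0$ for an algebraic $x\in K$. Where you diverge is the finiteness step. The paper proceeds by successive division: it divides $h(z)=(z-z_1)\cdots(z-z_n)$ by $(z-x_1)$ via the explicit remainder computation, repeats for further algebraic zeros, and when $n$ distinct algebraic zeros have been extracted it invokes the hypothesis that $\mathbb{C}^{\alpha}[z]$ is a UFD to force any $(n+1)$-st algebraic zero to coincide with one already found; this involves an iterative ``if this procedure stops\dots if not'' case distinction and leans on the UFD property, which is one of the constructive inputs the paper deliberately puts on stage. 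Your route replaces all of that with a single linear-algebra observation: if $x_0,\dots,x_n$ were pairwise distinct algebraic zeros, the coefficient vector of the monic $P$ would lie in the kernel of the Vandermonde matrix $(x_j^k)$, whose determinant $\prod_{j<k}(x_k-x_j)$ is a nonzero element of the discrete field $\mathbb{C}^{\alpha}$, hence apart from zero and invertible in $\mathbb{C}$, contradicting the leading coefficient $1$. This is constructively clean (the inverse comes from the adjugate, and deriving a contradiction to establish the negative statement ``no $n+1$ distinct algebraic zeros'' is legitimate), it avoids the UFD hypothesis and the factor-extraction procedure entirely, and it yields the explicit a priori bound $n$ directly; what it does not do is track the paper's narrative, in which the reliance on the UFD property of $\mathbb{C}^{\alpha}[z]$ is itself part of the point being made. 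Your closing remark about reading ``finitely many'' as an explicit bound rather than an enumeration is also the right constructive gloss and matches what the paper's own argument actually delivers.
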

\begin{proof}
	Applying Theorem~\ref{t-1}, we have $f(z)=(z-z_1)(z-z_2)\dots(z-z_n)g(z)$, where $z$ is every arbitrary element in $K$. If $x_1\in\mathbb{C}^{\alpha}$, and $x_1\in K$, then $g(x_1)\neq 0$. If $f(x_1)=0$, then $(x_1-z_1)(x_1-z_2)\dots(x_1-z_n)=0$. We let $h(z):=(z-z_1)(z-z_2)\dots(z-z_n)=z^n+a_{n-1}z^{n-1}+\dots+a_0$. Now, if $f(x_1)=0$, then $h(x_1)=0$. On the other hand, we have $h(z)=(z-x_1)q(z)+p$, where 
	\begin{gather*}
	q(z)=z^{n-1}+z^{n-2}(x_1+a_{n-1})+z^{n-3}({x_1}^2+x_1a_{n-1}+a_{n-2})\\
	+\dots+z({x_1}^{n-2}+{x_1}^{n-3}a_{n-1}+\dots+a_2)\\
	+({x_1}^{n-1}+{x_1}^{n-2}a_{n-1}+\dots+x_1a_2+a_1),\\
	p={x_1}^n+{x_1}^{n-1}a_{n-1}+\dots+x_1a_1+a_0.
	\end{gather*}
	Therefore, $h(z)=(z-x_1)q(z)$. Now, if there exists another similar $x_2\neq x_1$, then $q(x_2)=0$. If this procedure stops we are done. If not, then $(z-z_1)(z-z_2)\dots(z-z_n)=(z-x_1)(z-x_2)\dots(z-x_n)$. If there is another $x_{n+1}$, then $(z-x_1)(z-x_2)\dots(z-x_n)=(z-x_1)(z-x_2)\dots(z-x_{n+1})$; this is because $\mathbb{C}^{\alpha}[z]$ is a UFD. Hence, $x_n=x_{n+1}$. This completes the proof.	
\end{proof}
\begin{remark}
Note that the proof of Theorem~\ref{algebraic-zeros}, is constructive as we have $\neg (x\neq 0)\equiv (x=0)$. So that the complement of $\mathbb{C}-\{0\}$ is the set $\{0\}$. Therefore, we may consider $f$ a function from $\mathbb{C}^{\alpha}$ onto $\mathbb{C}-\{0\}\cup\{0\}$. If the range of $f$ is $\mathbb{C}-\{0\}$, then we are done; and this function is constructive by the Countable Axiom of Choice. Otherwise, we get a contradiction; therefore, its negation is true (that is how it works in constructive mathematics) which means there is an algebraic number which goes to zero. And we proceed. 
\end{remark}

\section{The example}
Before we introduce the example we need to do some discussions as follow.

First, we fix in the entirety of this section the simply connected open set $\mathcal{D}:=\{x+\I y; -2<x<-\epsilon, -1+\epsilon<y<1-\epsilon\}$, where $\epsilon$ is a very small and positive algebraic number. Now, we consider following functions which are defined on $\mathcal{D}$, $F_1(z):=\Lnn\left(\frac{-z}{\I z+1}\right)$, $F_2(z):=\Lnn\left(\frac{\I z}{\I z+1}\right)$, and $F_3(z):=\Lnn\left(1-\sin(-\I\Ln(z))\right)$, where the function $\Ln$ is the branch of the logarithmic function in which is differentiable on the simply connected open set of $\mathcal{D}$.

Now, we discuss differentiablity of $F_1, F_2, F_3$, on $\mathcal{D}$.

\textit{(1) $F_1$.}

We have $\frac{-x-\I y}{\I (x+\I y)+1}=\frac{-x}{x^2+y^2-2y+1}+\I\frac{x^2-y+y^2}{x^2+y^2-2y+1}$. But $x^2+y^2-2y+1<4+(1-\epsilon)^2+2(1-\epsilon)+1=8-4\epsilon+\epsilon^2\Rightarrow\frac{-x}{x^2+y^2-2y+1}>\frac{\epsilon}{8-4\epsilon+\epsilon^2}$. Hence, $\frac{-z}{\I z+1}$ is well contained in the open set of $\mathcal{A}:=\{x+\I y;\ x>0\}$. Therefore, $F_1$ is differentiable on $\mathcal{D}$. 

\textit{(1) $F_2$.}

We have $\frac{\I(x+\I y)}{\I (x+\I y)+1}=\frac{x^2-y+y^2}{x^2+y^2-2y+1}+\I\frac{x}{x^2+y^2-2y+1}$. But $\frac{x^2-y+y^2}{x^2+y^2-2y+1}>\frac{\epsilon^2+\epsilon-1}{8-4\epsilon+\epsilon^2}$. Hence for $\epsilon=\frac{4}{5}$, $\frac{\I z}{\I z+1}$ is well contained in the open set of $\mathcal{A}:=\{x+\I y;\ x>0\}$. Therefore, $F_2$ is differentiable on $\mathcal{D}$. 

\textit{(1) $F_3$.}

We have $1-\sin(-\I\Ln(z))=\frac{1}{2}\frac{2x^2+2y^2+y(-x^2-y^2-1)}{x^2+y^2}+\I\frac{1}{2}\frac{x^3+xy^2-x}{x^2+y^2}$. But we have $2\epsilon^2<2x^2+2y^2$, and $-(1-\epsilon)^2-5<-x^2-y^2-1<-\epsilon^2-1$, then $(-(1-\epsilon)^2-5)(1-\epsilon)<y(-x^2-y^2-1)$. Therefore, $-6+8\epsilon-\epsilon^2+\epsilon^3<2x^2+2y^2+y(-x^2-y^2-1)$. On the other hand, we get $x^2+y^2<4+(1-\epsilon)^2$. So that if we choose $\epsilon:=\frac{4}{5}$, we see that $1-\sin(-\I\Ln(z))$ is well contained in the open set of $\mathcal{A}:=\{x+\I y;\ x>0\}$. Therefore, $F_3$ is differentiable on $\mathcal{D}$.

So that the simply connected open set of $\mathcal{D}:=\{x+\I y;\ -2<x<\frac{-4}{5},\ \frac{-1}{5}<y<\frac{1}{5}\}$ is shown in the Figure~\ref{fig: 1}, where it is the region inside red segments.
\begin{figure}
	\centering
	\begin{tikzpicture}
	\begin{axis}[my style, xtick={-2,...,1}, ytick={-1,...,1},
	xmin=-2, xmax=1, ymin=-1, ymax=1]
	\addplot [mark=non] [style=very thick, color=red]  coordinates {(-4/5,-1/5) (-4/5, 1/5)};
	\addplot [mark=non] [style=very thick, color=red]  coordinates {(-2,-1/5) (-2, 1/5)};
	\addplot [mark=non] [style=very thick, color=red]  coordinates {(-2,-1/5) (-4/5, -1/5)};
	\addplot [mark=non] [style=very thick, color=red]  coordinates {(-2,1/5) (-4/5, 1/5)};
	\end{axis}
	\end{tikzpicture}
	\caption{The open simply connected set of $\mathcal{D}$.}\label{fig: 1}
\end{figure}
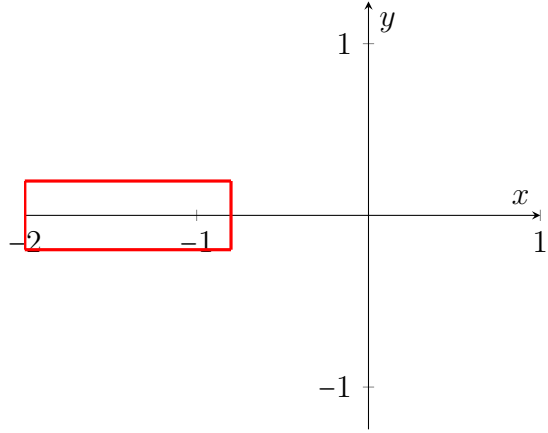

On the other hand, if the algebraic number $z:=\expp(\I\theta)$, then we have 
\begin{gather*}
\frac{-z}{\I z+1}=\frac{-\expp(\I\theta)}{\expp(\I\pi/2+\I\theta)+1}\\
=\frac{-\expp(\I\theta)}{2\cos(\theta/2+\pi/4)\expp(\I\theta/2+\I\pi/4)}\\
=\frac{-\expp(-\I\pi/4)\expp(\I\theta/2)}{2\cos(\theta/2+\pi/4)}.
\end{gather*}
And we have 
\begin{gather*}
\frac{\I z}{\I z+1}=\frac{z}{z-\I}\\
=\frac{\expp(\I\theta)}{\cos(\theta)+\I(\sin(\theta)-1)}\\
=\frac{\expp(\I\theta)}{\sin(\pi/2+\theta)+2\I(\sin(\theta/2-\pi/4)\cos(\theta/2+\pi/4))}\\
=\frac{\expp(\I\theta)}{2\sin(\pi/4+\theta/2)\cos(\pi/4+\theta/2)+2\I(\sin(\theta/2-\pi/4)\cos(\theta/2+\pi/4))}\\
=\frac{\expp(\I\theta)}{2\cos(\theta/2+\pi/4)(\cos(\theta/2-\pi/4)+\I\sin(\theta/2-\pi/4))}\\
=\frac{\expp(\I\pi/4)\expp(\I\theta/2)}{2\cos(\theta/2+\pi/4)}.
\end{gather*}
And we have
\begin{gather*}
1-\sin(-\I\Ln(z))=1-\sin(\theta)=1+\cos(\theta+\pi/2)=2\cos^2(\theta/2+\pi/4).
\end{gather*}

Now, we are ready to proceed with the example.

We consider the function
\[
\begin{array}{cc}
f:\mathcal{D}\rightarrow\mathbb{C},&f(z):=\Lnn(F_1(z))+\Lnn(F_2(z))+\Lnn(F_3(z))+\Lnnn(z)+\Ln(2)+\I\pi-\I\pi/2.
\end{array}
\]
By the argument above the function $f$ is differentiable on $\mathcal{D}$. Where $\mathcal{D}$ is an open simply connected set.  

Now, we evaluate $f$ in two set of points in the simply connected open set of $\mathcal{D}$. So that we consider two different cases as follow. 

\textit{Case (I). We consider the algebraic number $z:=\expp(\I\theta)$, $\pi/2<\theta<\pi$.}

We have
\begin{gather*}
f(z)=\Lnn(\frac{-\expp(-\I\pi/4)\expp(\I\theta/2)}{2\cos(\theta/2+\pi/4)})+\Lnn(\frac{\expp(\I\pi/4)\expp(\I\theta/2)}{2\cos(\theta/2+\pi/4)})\\
+\Lnn(2\cos^2(\theta/2+\pi/4))+\Lnnn(\expp(\I\theta))+\Lnn(2)+\I\pi-\I\pi/2\\
=\I\theta-2\Lnn(2)-2\Lnn(-\cos(\theta/2+\pi/4))+2\Lnn(-\cos(\theta/2+\pi/4))-\I\theta+2\Lnn(2)=0.
\end{gather*}

\textit{Case (II). We consider the algebraic number $z:=\expp(\I\theta)$, $-\pi<\theta<-\pi/2$.}

We have
\begin{gather*}
f(z)=\Lnn(\frac{-\expp(-\I\pi/4)\expp(\I\theta/2)}{2\cos(\theta/2+\pi/4)})+\Lnn(\frac{\expp(\I\pi/4)\expp(\I\theta/2)}{2\cos(\theta/2+\pi/4)})\\
+\Lnn(2\cos^2(\theta/2+\pi/4))+\Lnnn(\expp(\I\theta))+\Lnn(2)+\I\pi-\I\pi/2\\
=\I\theta-2\Lnn(2)-2\Lnn(\cos(\theta/2+\pi/4))+2\Lnn(\cos(\theta/2+\pi/4))-\I\theta+2\Lnn(2)+2\I\pi\\
=2\I\pi\neq 0.
\end{gather*}
Note that in the first case $\pi/4<\theta/2<\pi/2$, then  $\pi/2<\theta/2+\pi/4<3\pi/4$. Hence, we have $-\cos(\theta/2+\pi/4)>0$. And in the second case $-\pi/2<\theta/2<-\pi/4$, then  $-\pi/4<\theta/2+\pi/4<0$. Hence, we have $\cos(\theta/2+\pi/4)>0$.

Now, we consider $Sc(-1,\frac{1}{5}-\delta)\subset\subset\mathcal{D}$, where $\delta>0$ is a very small algebraic number. Clearly there are infinitely many algebraic numbers with the absolute value of 1 inside $Sc(-1,\frac{1}{5}-\delta)$; above and under the $x$-axis; examples are $\expp(-\I\pi+\frac{\I\pi}{n})$ and $\expp(\I\pi-\frac{\I\pi}{n})$ for big enough $n$'s. And on infinitely many of these algebraic numbers $f$ is zero and on infinitely many of these algebraic numbers $f$ is nonzero. Therefore, regarding Theorem~\ref{algebraic-zeros}, entire the above procedure leads us to a paradox. As the theorem says that since $f$ is analytic on the simply connected open set $\mathcal{D}$. The $Sc(-1,\frac{1}{5}-\delta)$ is a compact set well contained in $\mathcal{D}$, and $f$ is not identically zero on $Sc(-1,\frac{1}{5}-\delta)$. And $\inf\{|f(z)|; z\in Sc(-1,\frac{1}{5}-\delta) \}=0$, then there are finitely many algebraic numbers $x\in Sc(-1,\frac{1}{5}-\delta)$ such that $f(x)=0$. This is the paradox.

\begin{bibdiv}
			
	\begin{biblist}
		\bib{BB}{book}{
		title={Constructive analysis},
		author={Bishop, Errett},
		author={Bridges, Douglas},
		volume={279},
		year={2012},
		publisher={Springer Science \& Business Media}
	}
       \bib{B}{article}{
	   title={On the isolation of zeroes of an analytic function},
	   author={Bridges, Douglas},
	   journal={Pacific Journal of Mathematics},
	   volume={96},
	   number={1},
	   pages={13--22},
	   year={1981},
}		
		\bib{jabarnejad2016rees}{article}{
	title={Equations defining the multi-Rees algebras of powers of an ideal},
	author={Jabarnejad, Babak},
	journal={Journal of Pure and Applied Algebra},
	volume={222},
	pages={1906--1910},
	year={2018}
}
		\bib{BJN}{article}{
	title={A Paradox on the Law of Excluded Middle in the framework of category of set},
	author={Jabbar Nezahd, Babak},
	journal={arXiv:2410.16925}
}
\bib{R}{article}{
	title={The fundamental theorem of algebra: a constructive development without Choice},
	author={Richman, Fred},
	journal={Pacific Journal of Mathematics},
	volume={196},
	number={1},
	pages={213--230},
	year={2000},
	publisher={Mathematical Sciences Publishers}
}

	\end{biblist}
\end{bibdiv}

\end{document}